\documentclass[12pt]{article}
\usepackage[usenames]{color}
\usepackage{graphicx, subfigure}
\usepackage{amsmath, amsthm, amssymb}
\usepackage{amsfonts}
\usepackage{fullpage}
\usepackage{ifthen}
\usepackage{url}
\usepackage[sort&compress]{natbib}
\usepackage{multirow}

\usepackage[linesnumbered,algoruled,boxed,lined,commentsnumbered]{algorithm2e}
\usepackage{bm}
\usepackage{tikz-qtree}
\usepackage{tikz}
\usetikzlibrary{automata,arrows,positioning,calc}
\usepackage{hvlogos}

\newtheorem{theorem}{Theorem}[section]

\makeatletter
\def\@biblabel#1{}
\makeatother

\theoremstyle{plain}
\newtheorem{lem}{Lemma}

\theoremstyle{definition}
\newtheorem{example}{Example}

\theoremstyle{remark}

\newcommand{\E}{\mathsf{E}}

\newcommand{\ber}{{\sf Ber}}

\title{On the problem of assigning multiple interceptors over multiple aerial threats}

\author{Liang Hong\footnote{Department of Mathematical Sciences, The University of Texas at Dallas, 800 West Campbell Road, Richardson, TX 75080, USA. Tel.:~+1 (972) 883-2161. Email address: liang.hong@utdallas.edu.}}
\date{\today}

\begin{document}

\maketitle

\begin{abstract}
This article investigates the problem of assigning multiple identical interceptors over multiple identical aerial threats,  where all interceptors are launched in a single salvo.  For this problem, two strategies have been studied in the literature: (A) to spread all interceptors as evenly as possible over all threats, and (B) to randomly assign all interceptors over the threats.  The main contributions of this article are as follows.  First,   the literature contains empirical evidence that Strategy~A is more efficient than Strategy~B in terms of the mean number of missed threats, when the number of interceptors is no less than the number of threats.  This article gives a rigorous proof of this fact.  Secondly, it demonstrates numerically that Strategy~A can be significantly more efficient than Strategy~B.  Thirdly,  it shows that Strategy~A is not only superior to Strategy~B,  but also the bona fide optimal strategy.  Finally, this article establishes that these conclusions also hold when the number of interceptors is less than the number of threats.

\smallskip

{\emph{Keywords and phrases:} Drone/missile defense; multiple identical shots; multiple identical targets; optimal shooting strategy; shooting without feedback; weapon assignment. }
\end{abstract}

\section{Introduction}

The last few years have witnessed an increasing number of military conflicts around the world.  A conspicuous feature in these conflicts is the heavy use of drones and missiles.  As a result, a country must increase its capability to neutralize aerial threats.  To this end,  a country can make efforts in two directions: (i) to improve the quality of its interceptors, and (ii) to optimize its assignment of interceptors during a combat.  The former falls within the domain of weapon engineering; the latter is the business of operations research analysts.  The article is exclusively concerned with the latter.  

In this article, we investigate the problem of assigning $m$ identical interceptors over $n$ aerial threats where $m\geq 2$, $n\geq 2$, and each interceptor hits an aerial threat with probability $0<p<1$.   We assume that all interceptors are fired independently but in a single salvo (so that no feedback is available between the firing of any two interceptors).  We are interested in two strategies: 
\begin{enumerate}
\item[(A)]spread all interceptors as evenly as possible over all threats;
\item[(B)]randomly assign all interceptors over the threats. 
\end{enumerate}
We will use the mean number of missed threats to assess the efficiency of a given strategy.  This is a classical problem in the context of assigning identical shots to identical targets, and it is well-documented in the literature (e.g.,   Sections~2.4.1 and 2.4.4 of Washburn and Kress 2009).  Though research in the past three decades has been focusing on more general cases where decoys are present, weapons or targets are not identical,  or feedback is available (e.g., Aviv and Kress 1997; Yost and Washburn 2000; Glazebrook and Washburn 2004; Washburn 2005; Pryluk and Shima 2014; Kline et al. 2019; Kalyanam and Clarkson 2021; Atkinson and Kress 2025),  this classical problem has not been completed settled yet.

Let $X_A$ and $X_B$ be the numbers of missed threats when the interceptors are fired using Strategy~A and Strategy~B,  respectively.   In combat, the number of interceptors can be either less than the number of threats (i.e., $m<n$) or the other way around (i.e., $m\geq n$).  Therefore,  we need to examine both cases.  To my knowledge, the case $m<n$ has not been investigated.  The case $m\geq n$ has been studied,  and the key results are summarized in Section~2.4.1 and Section~2.4.4 of Washburn and Kress (2009).  In particular, it is known that 
\begin{eqnarray}
\label{eq:mean}
\E[X_A] &=& (n-r)(1-p)^k+r(1-p)^{k+1}, \nonumber \\
\E[X_B] &=& n(1-p/n)^m, 
\end{eqnarray}
where $m=nk+r$, $k$ is a positive integer,  $r$ is an integer such that $0\leq r<n$.   It has also been demonstrated empirically that Strategy~A is more efficient than Strategy~B,  i.e., $\E[X_A]\leq \E[X_B]$.  However,  several important questions remain open. 
\begin{enumerate}
\item[(1)]Does $\E[X_A]\leq \E[X_B]$ always hold? 
\item[(2)]Is Strategy~A only slightly more efficient than Strategy~B or it can be significantly more efficient than Strategy~B?
\item[(3)]Is Strategy~A the optimal strategy in terms of the mean number of missed threats?
\item[(4)]How about the case $m<n$?
\end{enumerate}
This article answers these questions by establishing the following facts.
\begin{enumerate}
\item[$\bullet$]$\E[X_A]\leq \E[X_B]$ always holds.
\item[$\bullet$]Strategy~A can be significantly more efficient than Strategy~B.
\item[$\bullet$]Strategy~A is the optimal strategy in terms of the mean number of missed threats. 
\item[$\bullet$]The above three facts hold for the case $m<n$. 
\end{enumerate}
These results complement the extant results.  Together, they give a fairly complete treatment of the problem of assigning multiple identical shots to multiple identical targets without feedback.

\section{Main results}

Throughout,  we will apply the following inequality a few times. 

\begin{lem}[Bernoulli's inequality]
Let $x\in\mathbb{R}$ and $b\in\mathbb{N}$. For all $x\geq -1$, $x\neq 0$,  and $b\geq 1$,  
\[
(1+x)^b\geq 1+bx.
\]
\end{lem}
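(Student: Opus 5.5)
We prove Bernoulli's inequality by induction on $b$, keeping $x\geq -1$, $x\neq 0$ fixed. The one fact needed is that $1+x\geq 0$. This lets us multiply an inequality by $1+x$ without reversing it.

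\emph{Base case.} For $b=1$ both sides equal $1+x$, so the inequality holds with equality.

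\emph{Inductive step.} Suppose $(1+x)^b\geq 1+bx$ for some $b\geq 1$. Since $1+x\geq 0$, multiplying both sides by $1+x$ preserves the direction:
\[
(1+x)^{b+1}\geq (1+bx)(1+x)=1+(b+1)x+bx^2 .
\]
Because $b\geq 1$ and $x^2\geq 0$, the last expression is at least $1+(b+1)x$. Hence $(1+x)^{b+1}\geq 1+(b+1)x$, which closes the induction. In fact $x\neq 0$ gives $bx^2>0$, so the inequality is strict for every $b\geq 2$. The hypothesis $x\neq 0$ is therefore needed only if a strict version is wanted later; the weak inequality holds for all $x\geq -1$.

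The only point requiring care is the sign of $1+x$ in the multiplication step, and the assumption $x\geq -1$ is exactly what guarantees it. No other step involves any real difficulty.
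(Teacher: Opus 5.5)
Your induction proof is correct. The paper gives no argument of its own for this lemma; it only cites Stromberg (1981) and Steele (2008). Your multiply-by-$(1+x)$ induction is the standard textbook proof of this fact, and your extra remarks (strictness for $b\geq 2$ when $x\neq 0$, and validity of the weak inequality at $x=0$) are also correct.
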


\begin{proof}
See,  for instance, Page~26 of Stromberg (1981) or Page~31 of Steele (2008).
\end{proof}

\subsection{When $m\geq n$}

\begin{theorem}
\label{thm:comparison}
If $ m\geq n\geq 2$ and $0<p<1$,  then $\E[X_A]\leq \E[X_B]$.
\end{theorem}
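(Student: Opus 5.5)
The plan is to compare the two formulas in (\ref{eq:mean}) directly. Put $q=1-p\in(0,1)$ and $m=nk+r$ with $k\geq 1$ and $0\leq r<n$. Then
\[
\E[X_A]=q^k\bigl(n-r+rq\bigr)=q^k(n-rp),\qquad \E[X_B]=n(1-p/n)^{nk+r}.
\]
We must show $q^k(n-rp)\leq n(1-p/n)^{nk}(1-p/n)^r$. The argument splits the exponent into the $nk$ part and the $r$ part and bounds each factor separately.

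\textbf{Step 1 (the $nk$ part).} We show $(1-p/n)^n\geq 1-p=q$. This is exactly Bernoulli's inequality (the Lemma above) with $x=-p/n\in[-1,0)$, $x\neq 0$, and $b=n$. Raising both sides to the power $k$ gives $(1-p/n)^{nk}\geq q^k$.

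\textbf{Step 2 (the $r$ part).} Next we show $n(1-p/n)^r\geq n-rp$. For $r=0$ this is an equality. For $r\geq 1$, apply Bernoulli's inequality again with $x=-p/n$ and $b=r$, which gives $(1-p/n)^r\geq 1-rp/n$. Multiplying by $n$ yields the claim. Note that $n-rp>0$, so all quantities involved are positive.

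\textbf{Step 3 (combine).} Multiply the two nonnegative inequalities from Steps 1 and 2:
\[
\E[X_B]=(1-p/n)^{nk}\cdot n(1-p/n)^r\geq q^k(n-rp)=\E[X_A].
\]

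The only real care needed is recognizing that $\E[X_A]$ factors as $q^k(n-rp)$. Once that is seen, the proof reduces to two applications of Bernoulli's inequality, and I expect no serious obstacle. The point to watch is the hypotheses of the Lemma: we need $x=-p/n\geq -1$ and $x\neq 0$, which hold since $0<p<1$, and we need $b\geq 1$, which is why the case $r=0$ is handled separately. If strict inequality were desired, it would follow because Bernoulli's inequality is strict for $b\geq 2$ and $n\geq 2$.
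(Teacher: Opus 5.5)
Your proof is correct and is essentially the paper's argument. The paper writes $\E[X_B]/\E[X_A]=\bigl[(1-p/n)^r/(1-rp/n)\bigr]\bigl[(1-p/n)^n/(1-p)\bigr]^k$, which uses your factorization $\E[X_A]=(1-p)^k(n-rp)$, and bounds each factor by Bernoulli's inequality, exactly the two inequalities you multiply. Your separate handling of $r=0$ (where the Lemma's hypothesis $b\geq 1$ fails) and your check that $n-rp>0$ are small points of care that the paper passes over.
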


\begin{proof}
In this case, we have $m=nk+r$,  where $k$ is a positive integer and $r$ is an integer such that $0\leq r<n$.  It follows from (\ref{eq:mean}) that 
\begin{eqnarray*}
\frac{\E[X_B]}{\E[X_A]} &=& \frac{n(1-p/n)^m}{(n-r)(1-p)^k+r(1-p)^{k+1}}\\
				  &=&\frac{(1-p/n)^{nk+r}}{(1-p)^k(1-rp/n)}\\
				  &=& \left[\frac{(1-p/n)^r}{(1-rp/n)}\right]\left[\frac{(1-p/n)^n}{(1-p)}\right]^k.
\end{eqnarray*}
Beroulli's inequality  implies $(1-p/n)^r\geq 1-rp/n$ and $(1-p/n)^n\geq 1-p$. Therefore, 
\[
\frac{\E[X_B]}{\E[X_A]} \geq \left[\frac{(1-rp/n)}{(1-rp/n)}\right]\left[\frac{(1-p)}{(1-p)}\right]^k=1.
\]
Since $\E[X_A]$ and $\E[X_B]$ are both positive., we have $\E[X_A]\leq \E[X_B]$.
\end{proof}

Define 
\[
D=\E[X_B]/n-\E[X_A]/n.
\]
That is, $D$ is the difference between the mean numbers of missed threats per threat using Strategy~A and Strategy~B.  Since $\E[X_A]/n$ and $\E[X_B]/n$ are both between $0$ and $1$,  Theorem~\ref{thm:comparison} implies that $0\leq D\leq 1$. Intuitively, $D$ measures the efficiency we gain in using Strategy~A over Strategy~B.  When $p=0.4$ and $n\geq 100$,  Washburn and Kress (2009) find that $D$ is fairly small for all $k\geq 1$, i.e., using Strategy~A does not gain much more than using Strategy~B.  This begs the question: Can $D$ be significant in some cases? The next example shows that the answer is affirmative. 

\begin{example}
\label{ex1}
Suppose $p=0.8$, $n=10$,  $k=2$ and $r=1$. Then $\E[X_A]/n=0.0368$, $\E[X_B]/n=0.1736$,  $\E[X_A]/\E[X_B]=21.20\%$,  and $D=0.137$.  In this case,  Strategy~A is nearly $5$ times more efficient than Strategy~B.  In view of the range and interpretation of $D$,  the value of $D$ is significant.   Panel~(a) of Figure~\ref{fig:ex1} provides plots of $\E[X]/n$ for Strategy~A (circle) and Strategy~B (bullet) when $k$ ranges from $1$ to $10$.   It shows that the advantage of Strategy~A over Strategy~B becomes less pronounced when $k$ increases.  Intuitively,  as $k$ increases,  more interceptors are likely to be assigned to a threat even if we randomly assign all interceptors; hence,  that threat is likely to be neutralized.   Panel~(b) gives the plot of $D$ as $p$ increases, and it suggests that when the quality of the interceptor is relatively high (i.e.,  the success probability $p$ is relatively large),  the difference between the two strategies tends to be significant.  

\begin{figure}[h]
\begin{center}
\subfigure[Mean number of missed threats per threat]{\scalebox{0.45}{\includegraphics{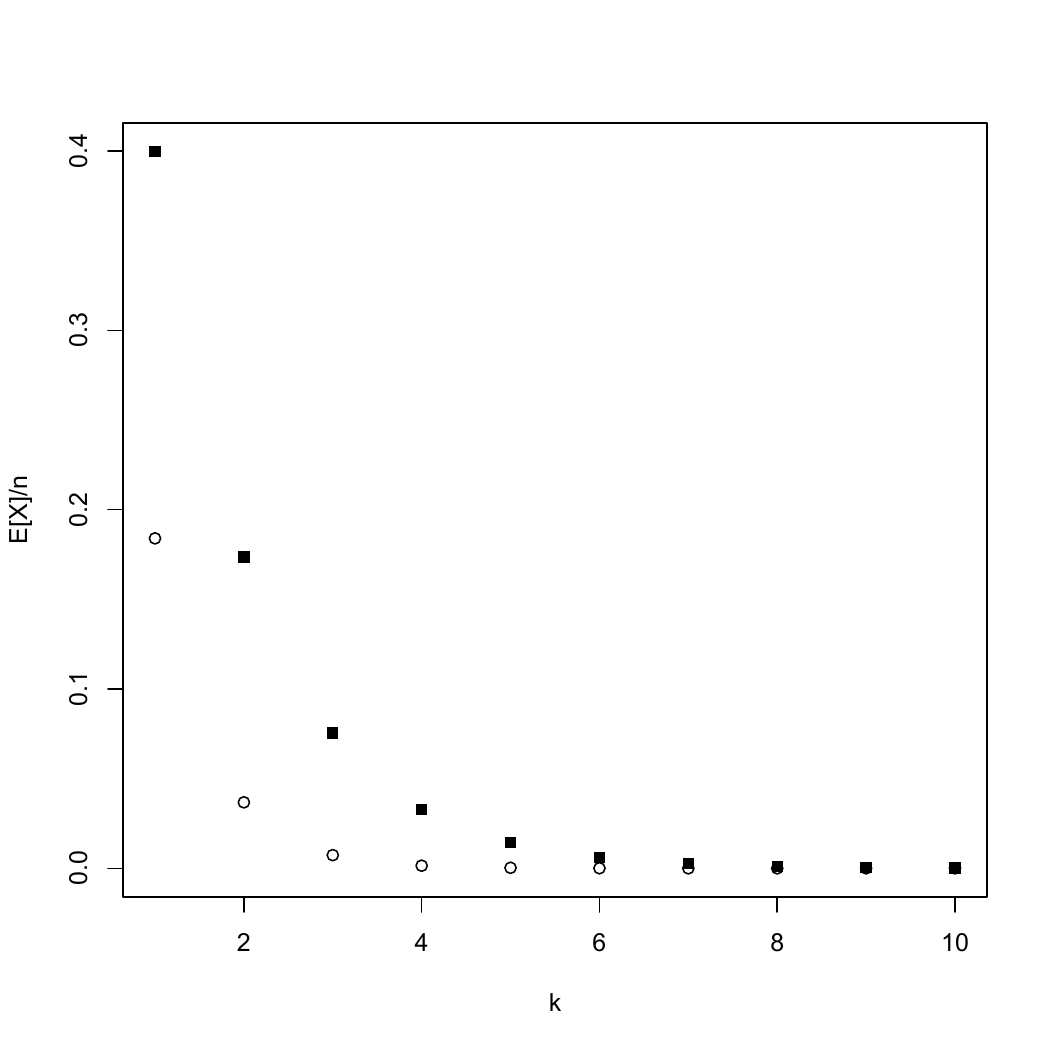}}}
\subfigure[Values of $D$ for various values of $p$]{\scalebox{0.45}{\includegraphics{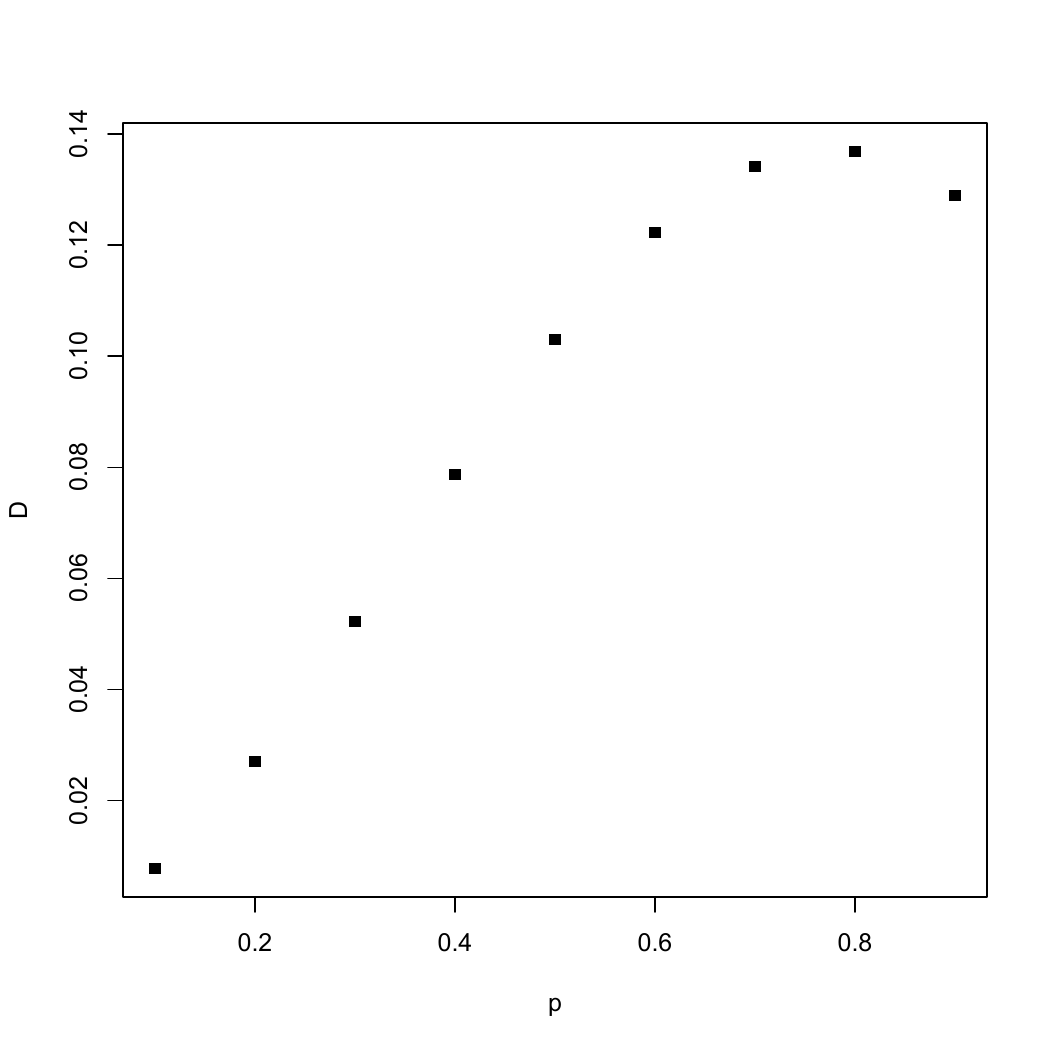}}}
\caption{Plots of the mean number of missed threats per threat for Strategy~A (circle) and Strategy~B (bullet) at various values of $k$ when $p=0.8$, $n=10$,  and $r=1$ $\&$ plot of $D$ for various values of $p$ when $n=10$,  $k=2$,  and $r=1$ in Example~1.}
\label{fig:ex1}
\end{center}
\end{figure}
\end{example}

\begin{theorem}
\label{thm:optimal}
If $ m\geq n\geq 2$ and $0<p<1$,   then Strategy~A is optimal in terms of the mean number of missed threats. 
\end{theorem}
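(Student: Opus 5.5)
We first fix what "strategy" means. Any assignment without feedback, possibly randomized, is a probability distribution over allocation vectors $(m_1,\dots,m_n)$ of nonnegative integers with $m_1+\cdots+m_n=m$, where $m_i$ is the number of interceptors fired at threat $i$. For a fixed (deterministic) allocation, linearity of expectation and independence of the shots give
\[
\E[X]=\sum_{i=1}^n (1-p)^{m_i}.
\]
For a randomized strategy, $\E[X]$ is the average of this quantity over the random allocation. It is therefore at least the minimum over deterministic allocations, so it suffices to minimize $f(m_1,\dots,m_n)=\sum_i q^{m_i}$ with $q=1-p\in(0,1)$ over integer vectors with sum $m$. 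Note that Strategy~B is one such randomized strategy, so Theorem~\ref{thm:comparison} becomes a special case of this argument.

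The key step is an exchange (smoothing) argument based on the strict convexity of $j\mapsto q^j$ on the integers. Suppose some allocation has two coordinates with $m_i\geq m_j+2$. Move one interceptor from threat $i$ to threat $j$. The change in $f$ is
\[
q^{m_i-1}+q^{m_j+1}-q^{m_i}-q^{m_j}=p\left(q^{m_i-1}-q^{m_j}\right)\leq 0,
\]
since $m_i-1\geq m_j+1>m_j$ and $q<1$. In fact the change is strictly negative, so the move strictly decreases $f$.

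Each such move strictly decreases $\sum_i m_i^2$, which is a nonnegative integer, so the process terminates. At termination every pair satisfies $|m_i-m_j|\leq 1$. Writing $m=nk+r$ with $0\le r<n$, an allocation with all coordinates within $1$ of each other and summing to $m$ must have exactly $r$ coordinates equal to $k+1$ and $n-r$ equal to $k$. This is exactly Strategy~A, and its value is
\[
(n-r)q^k+r\,q^{k+1},
\]
which matches (\ref{eq:mean}). Hence $f(\mathbf m)\geq \E[X_A]$ for every allocation $\mathbf m$, and so $\E[X]\geq\E[X_A]$ for every strategy.

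The main obstacle is not the inequality itself, which is the one-line computation above, but stating the class of admissible strategies precisely enough, including randomized ones, that the reduction to deterministic allocations is rigorous. It also needs a clean termination argument for the smoothing procedure. If the smoothing argument were to feel informal, a fallback is a direct proof. Let $\mathbf m$ be a minimizer, which exists because there are finitely many allocations. If $\mathbf m$ had two coordinates differing by at least $2$, the exchange above would strictly lower $f$, contradicting minimality. So any minimizer is balanced, and the balanced allocation is unique up to permuting the threats, which gives Strategy~A.
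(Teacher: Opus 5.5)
Your proof is correct and takes essentially the same route as the paper. Both rest on a pairwise exchange: moving one interceptor from a threat with $m_i\geq m_j+2$ to threat $j$ changes the mean by $p\,[(1-p)^{m_i-1}-(1-p)^{m_j}]<0$, so any optimal allocation is balanced and therefore coincides with Strategy~A. Your single exchange step also covers the paper's separate pigeonhole treatment of threats with $m_i=0$, and your reduction from randomized to deterministic allocations and your termination argument make explicit points the paper leaves implicit.
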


\begin{proof}
Let $X_S$ be the number of missed threats when the interceptors are fired using Strategy~S, where $S$ is an arbitrary strategy.  ($S$ may or may not equal $A$ or $B$.) Without loss of generality,  we may number the $n$ threats as Threat~1, Threat~2, \ldots, Threat~$n$.  For $1\leq i\leq n$, let $m_i$ denote the number of interceptors fired at Threat~$i$.  Then $0\leq m_i\leq n$ and $m_1+\ldots+m_n=m$.  We can write $X_S$ as 
\[
X_S=X_1+\ldots+X_n,
\]
where
\[
X_i=\left\{
		                           \begin{array}{ll}
		                           1,& \hbox{if $m_i=0$,} \\
					 \sim \ber((1-p)^{m_i}) & \hbox{if $m_i\geq 1$,} 
		                          \end{array}
		                         \right.
\]
and $X_i\sim \ber((1-p)^{m_i})$ means $X_i$ is a Bernoulli random variable that takes value $1$ with  probability $(1-p)^{m_i}$ and value $0$ with probability $1-(1-p)^{m_i}$.  Intuitively,  the binary random variable $X_i$ denotes whether Threat $i$ has penetrated the defense layer of the interceptors: when $X_i=1$,  Threat ~$i$  is missed; when $X_i=0$, Threat~$i$ is neutralized.  Our goal is to choose $(m_1, \ldots, m_n)$ so that $\E[X_S]$ is minimized. 

Suppose $m_i=0$ for some $i$. Since $m>n$,  the pigeonhole principle (e.g., Section~28 of Aigner and Ziegler 2018) implies that there is a $j\neq i$ such that $m_j\geq 2$.  Now keep $m_k$ intact for all $k\neq i$ and $k\neq j$,  increase $m_i$ by $1$, and decrease $m_j$ by $1$. The net change of $\E[X_S]$ is
\begin{eqnarray*}
& & [(1-p)^{m_i+1}-(1-p)^{m_i}]+[(1-p)^{m_j-1}-(1-p)^{m_j}]\\
&=& ([1-p)-1]+(1-p)^{m_j-1}[1-(1-p)]\\
&=& p[(1-p)^{m_j-1}-1]\\
&<& 0.
\end{eqnarray*}
This shows that it is never optimal to have $m_i=0$ for any $1\leq i\leq n$. That is,  the optimal strategy must fire at least one interceptor at each of the $n$ threats. 

Next, assume that there are $1\leq i, j\leq n$ such that $m_j\geq 1$ and $m_i-m_j\geq 2$.  If we decrease $m_i$ by $1$, increase $m_j$ by $1$, and keep everything else unchanged, then the net change of $\E[X_S]$ will be
\begin{eqnarray*}
& & [(1-p)^{m_i-1}-(1-p)^{m_i}]+[(1-p)^{m_j+1}-(1-p)^{m_j}]\\
&=& p(1-p)^{m_i-1}- p(1-p)^{m_j} \\
&=& p[(1-p)^{m_i-1}- (1-p)^{m_j}] \\
&<& 0. 
\end{eqnarray*}
Thus,  it is never optimal to have $m_i$ and $m_i$ such that $m_j\geq 1$ and $m_i-m_j\geq 2$. That is, the optimal strategy never fires more than $2$ interceptors at any of the $n$  threats.  

The above two observations,  when taken together,  imply that the optimal strategy is to fire $k+1$ interceptors at each of $r$ threats and $k$ interceptors at each of the remaining $n-r$ threats.  Therefore,  Strategy~A is the optimal strategy. 
\end{proof}

\subsection{When $m< n$}

The formulas for $\E[X_A]$ and $\E[X_B]$ when $m<n$ have not been documented in the literature.   Therefore,  we first derive them first.

\begin{theorem}
\label{thm:mean2}
If $2\leq m<n$ and $0<p<1$,  then
\begin{eqnarray}
\label{eq:mean2}
\E[X_A] &=& n-mp,  \nonumber\\
\E[X_B] &=& n(1-p/n)^m.
\end{eqnarray}
\end{theorem}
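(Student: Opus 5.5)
We will compute both expectations by writing the number of missed threats as a sum of indicators, exactly as in the proof of Theorem~\ref{thm:optimal}, and then using linearity of expectation. Number the threats as Threat~1, \ldots, Threat~$n$. For $1\leq i\leq n$, let $X_i$ be the indicator that Threat~$i$ is missed, so that $X_A$ (or $X_B$) equals $X_1+\cdots+X_n$ under the corresponding strategy. Then $\E[X]=\sum_{i=1}^n P(X_i=1)$, and it remains to compute the miss probability of each threat under each strategy.

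\textbf{Strategy~A.} When $m<n$, spreading the interceptors as evenly as possible means firing exactly one interceptor at each of $m$ distinct threats and none at the remaining $n-m$ threats. (This is the case $k=0$, $r=m$ of the decomposition $m=nk+r$.) A threat receiving no interceptor is missed with probability $1$. A threat receiving one interceptor is missed with probability $1-p$. Hence
\[
\E[X_A]=(n-m)\cdot 1+m(1-p)=n-mp.
\]
This agrees with formula~(\ref{eq:mean}) evaluated at $k=0$, $r=m$, which is a convenient consistency check.

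\textbf{Strategy~B.} Under random assignment, each interceptor independently chooses a threat uniformly at random among the $n$ threats, and independently hits with probability $p$. The key observation is that, for a fixed Threat~$i$, a single interceptor neutralizes Threat~$i$ only if it is assigned to Threat~$i$ and hits. This event has probability $(1/n)\cdot p=p/n$. By independence across the $m$ interceptors, Threat~$i$ survives with probability $(1-p/n)^m$. Equivalently, we can condition on $m_i\sim\mathrm{Bin}(m,1/n)$ and use the binomial theorem:
\[
\sum_{j=0}^m \binom{m}{j}(1/n)^j(1-1/n)^{m-j}(1-p)^j=(1-p/n)^m.
\]
Summing over the $n$ threats gives $\E[X_B]=n(1-p/n)^m$. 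Note that this derivation never uses $m\geq n$, so it is identical to the known case.

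\textbf{Main obstacle.} There is no real analytic obstacle here. The main point requiring care is stating precisely what the two strategies mean when $m<n$. For Strategy~A, ``as evenly as possible'' must mean at most one interceptor per threat. For Strategy~B, the model must be independent uniform assignment of each interceptor. Once these interpretations are fixed, both formulas follow directly from linearity of expectation.
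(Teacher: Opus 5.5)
Your proposal is correct and follows the paper's route. For Strategy~A you use the same indicator decomposition (one interceptor at each of $m$ threats, the other $n-m$ threats missed with certainty), which gives $n-mp$. For Strategy~B you write out the per-interceptor survival argument giving $(1-p/n)^m$ per threat; the paper only cites this from Washburn and Kress (2009) as carrying over unchanged from the case $m\geq n$, so your version is more self-contained.
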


\begin{proof}
Similar to the proof of Theorem~\ref{thm:optimal},  we number the $n$ aerial threats as Threat~1, Threat~2, \ldots, Threat~$n$.  Since $m<n$,  at least $n-m$ threats will not be neutralized. Without loss of generality, we may assume that we fire $m$ interceptors over Threat~1, \ldots, Threat~$m$.  For $1\leq i\leq m$, let $m_i$ be the number of interceptors fired at Threat~$i$. Then we can write $X_A$ as
\[
X_A = X_1+\ldots+X_n,
\]
where $X_i\sim \ber((1-p))$ and $X_{m+1}=\ldots X_n=1$. Taking expectation on both sides of the last equation, we obtain 
\[
\E[X_A]=m(1-p)+(n-m)=n-mp.
\]

The derivation of $\E[X_B]$ follows from the same line of reasoning as in the case $m\geq n$; see, for example, Section~2.4.4 of Washburn and Kress (2009).
\end{proof}

\begin{theorem}
If $ 2\leq m< n$ and $0<p<1$,  then $\E[X_A]\leq \E[X_B]$.
\end{theorem}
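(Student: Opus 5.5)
We will use the closed forms of Theorem~\ref{thm:mean2}, $\E[X_A]=n-mp$ and $\E[X_B]=n(1-p/n)^m$, and reduce the claim to a single application of Bernoulli's inequality. The claim $\E[X_A]\leq\E[X_B]$ is equivalent, after dividing by $n>0$, to
\[
1-\frac{mp}{n}\leq \left(1-\frac{p}{n}\right)^m .
\]

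This is exactly Bernoulli's inequality applied with $x=-p/n$ and $b=m$. The hypotheses hold: $b=m\geq 2\geq 1$ is a natural number, $x\neq 0$ because $p>0$, and $x\geq -1$ because $0<p<1$ and $n\geq 2$ give $0<p/n<1$. The inequality then yields $(1-p/n)^m\geq 1-mp/n$. Multiplying by $n$ gives $n(1-p/n)^m\geq n-mp$, that is, $\E[X_B]\geq\E[X_A]$.

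No step here is a real obstacle; the only thing to check is the range condition $x\geq -1$, which is immediate. Unlike the proof of Theorem~\ref{thm:comparison}, we do not need to divide by $\E[X_A]$; this is convenient, although $\E[X_A]=n-mp>0$ anyway since $m<n$ and $p<1$. We also note that the inequality is in fact strict, since $m\geq 2$ and $x\neq 0$, but the theorem only asserts the non-strict version.
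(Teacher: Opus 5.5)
Your proof is correct and is essentially the paper's argument: the paper also substitutes the closed forms from Theorem~\ref{thm:mean2} and applies Bernoulli's inequality with $x=-p/n$ and $b=m$, obtaining $n(1-p/n)^m\geq n-mp$ in a single line. Your explicit check of the hypotheses $x\geq -1$ and $x\neq 0$, which the paper leaves implicit, is a small improvement.
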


\begin{proof}
Bernoulli's inequality implies that 
\[
\E[X_B] = n(1-p/n)^m \geq n(1-mp/n)=n-mp=\E[X_A].
\]
\end{proof}

\begin{example}
Suppose  $p=0.9$, $m=18$,  and $n=20$.  Then $\E[X_A]/n=0.190$, $\E[X_B]/n=0.437$,  $\E[X_A]/\E[X_B]=43.53\%$,  and $D=0.247$.  Here Strategy~A is nearly twice as efficient as Strategy~B.  Again, the value of $D$ is significant.  
Panel~(a) of Figure~\ref{fig:ex2} provides plots of $\E[X]/n$ for Strategy~A (circle) and Strategy~B (bullet) when $m$ ranges from $1$ to $19$; Panel~(b) gives the plot of $D$ as $p$ increases.  Similar to Example~1,  when the success probability $p$ is relatively large,  the difference between the two strategies tends to be significant.  

\begin{figure}[h]
\begin{center}
\subfigure[Mean number of missed threats per threat]{\scalebox{0.45}{\includegraphics{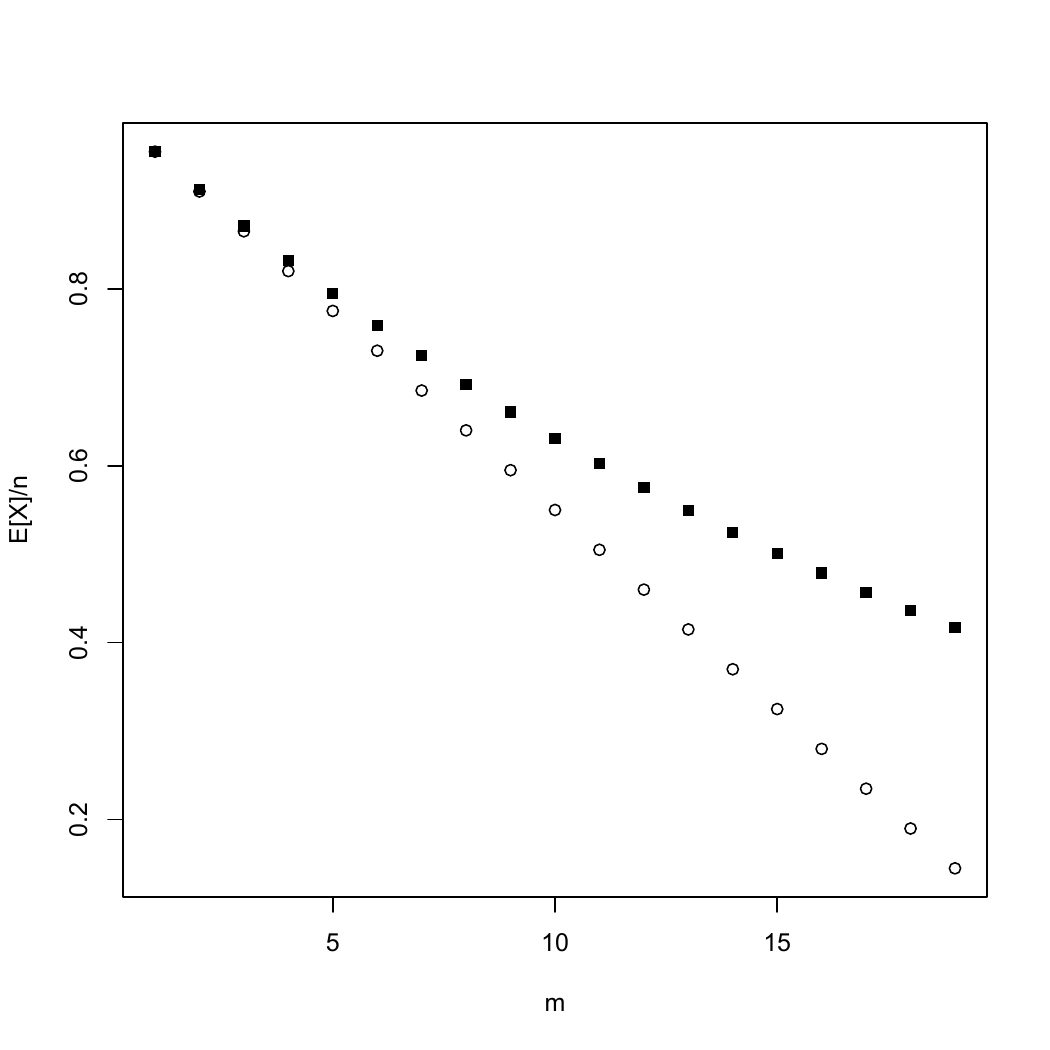}}}
\subfigure[Values of $D$ for various values of $p$]{\scalebox{0.45}{\includegraphics{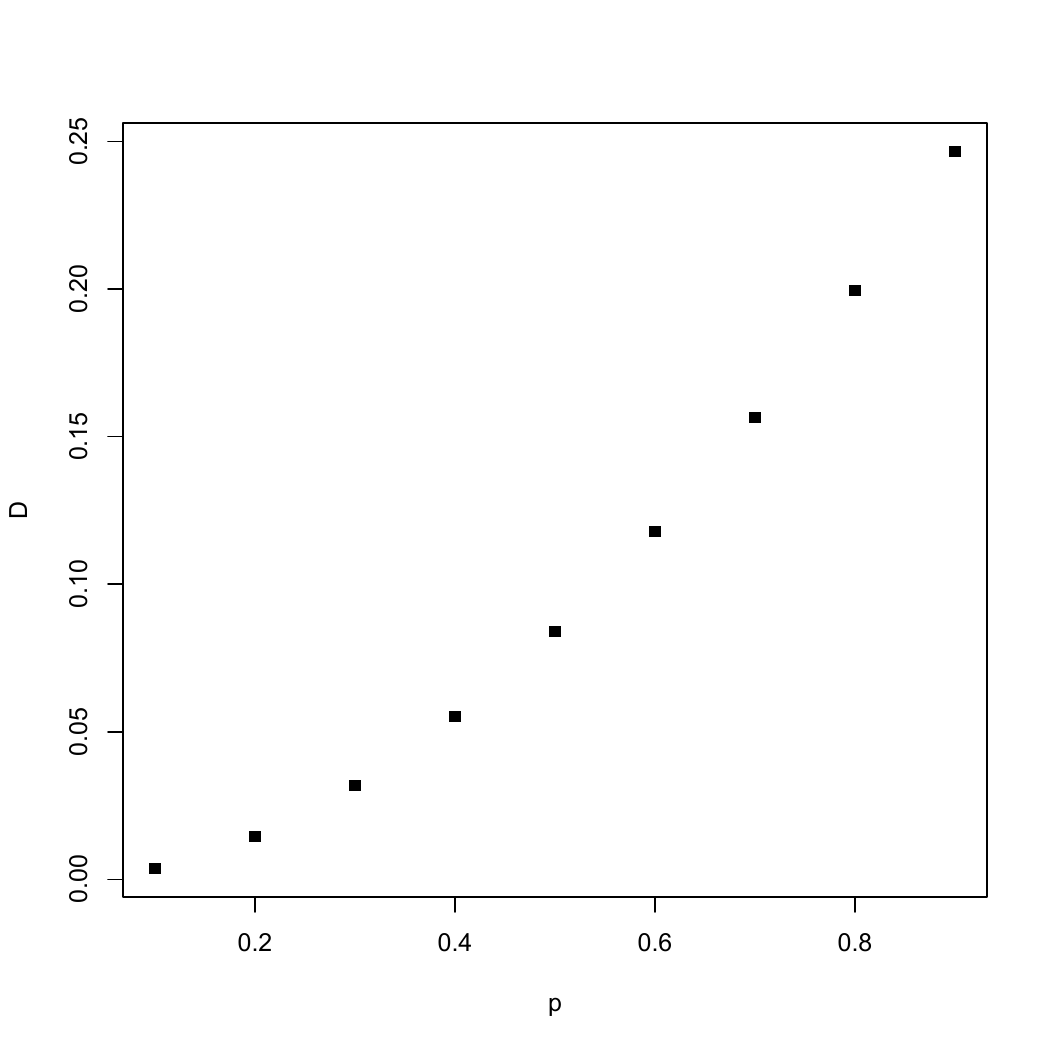}}}
\caption{Plots of the mean number of missed threats per threat for Strategy~A (circle) and Strategy~B (bullet) at various values of $m$ when $p=0.9$ and $n=20$ $\&$ plot of $D$ for various values of $p$ when $n=20$ and $m=18$ in Example~2.}
\label{fig:ex2}
\end{center}
\end{figure}
\end{example}

\begin{theorem}
\label{thm:optimal2}
If $ 2\leq m<n$ and $0<p<1$,   then Strategy~A is optimal in terms of the mean number of missed threats. 
\end{theorem}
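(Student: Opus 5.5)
We follow the exchange argument from the proof of Theorem~\ref{thm:optimal}, now for $m<n$. Let $S$ be an arbitrary strategy and let $m_i\geq 0$ be the number of interceptors fired at Threat~$i$, with $m_1+\ldots+m_n=m$. As before, $X_S=X_1+\ldots+X_n$ with $X_i\sim\ber((1-p)^{m_i})$, where $m_i=0$ gives $X_i=1$ deterministically. By linearity,
\[
\E[X_S]=\sum_{i=1}^n (1-p)^{m_i},
\]
and the task is to minimize this over nonnegative integer vectors with sum $m$. Strategy~A corresponds to $m$ coordinates equal to $1$ and $n-m$ equal to $0$, which gives $n-mp$ by Theorem~\ref{thm:mean2}.

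The key step is a single local improvement. Suppose some $m_j\geq 2$. Since $m<n$ and the $m_i$ sum to $m$, the pigeonhole principle gives an index $i$ with $m_i=0$. Move one interceptor from Threat~$j$ to Threat~$i$. Exactly as in the first computation in the proof of Theorem~\ref{thm:optimal}, the net change in $\E[X_S]$ is
\[
[(1-p)-1]+[(1-p)^{m_j-1}-(1-p)^{m_j}] = p\,[(1-p)^{m_j-1}-1]<0,
\]
because $m_j-1\geq 1$ and $0<p<1$. So any allocation with some $m_j\geq 2$ is strictly improvable, and an optimal allocation has every $m_i\in\{0,1\}$.

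Since the feasible set is finite, a minimizer exists, and by the previous step every minimizer has all $m_i\in\{0,1\}$. Because the $m_i$ sum to $m$, exactly $m$ of them equal $1$ and the rest are $0$. This is Strategy~A, and its value $n-mp$ is the same for every choice of which $m$ threats are engaged.

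If one wants a direct proof in place of the exchange argument, apply Bernoulli's inequality termwise: $(1-p)^{m_i}\geq 1-m_ip$ for $m_i\geq 1$, with equality at $m_i=0$. Summing gives $\E[X_S]\geq n-p\sum_i m_i=n-mp=\E[X_A]$, which proves optimality in one line.

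The main obstacle is minor. We must make sure the pigeonhole step is stated correctly: the theorem concerns $m<n$, so it is a zero coordinate that must exist, not a coordinate $\geq 2$. We must also note that the improvement is strict, which is what rules out any other minimizer. The Bernoulli route avoids both issues.
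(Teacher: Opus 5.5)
Your proof is correct, and both of your arguments take a different route from the paper's.

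The paper relabels so that Threats $m+1,\ldots,n$ receive no interceptors. It then applies AM--GM to $(1-p)^{m_1},\ldots,(1-p)^{m_m}$, whose product is the constant $(1-p)^{m}$. The equality case of AM--GM forces all $m_i$ to be equal, hence $m_i=1$. As printed, though, the paper's display $\E[X_1]+\cdots+\E[X_m]\geq \E[X_1]\times\cdots\times\E[X_m]$ is not AM--GM. It would only give the unattainable bound $(1-p)^m+n-m$. The intended inequality is $\sum_{i\le m}(1-p)^{m_i}\geq m\bigl((1-p)^{m}\bigr)^{1/m}=m(1-p)$.

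Your exchange argument instead reuses the local move from the proof of Theorem~\ref{thm:optimal}. It needs existence of a minimizer and strict improvement, and you supply both. Its advantage is that the same mechanism works in both regimes. AM--GM by itself cannot settle $m\ge n$ with $r>0$, since the equalizing allocation $m_i=m/n$ is then not an integer.

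Your Bernoulli route is the most economical of the three. It gives a global linear lower bound with no relabeling, and it is the same inequality the paper uses for the A-versus-B comparison. Because it holds allocation by allocation, taking expectations extends it verbatim to randomized allocations such as Strategy~B. Also, equality in $(1-p)^{m_i}\geq 1-m_ip$ holds at $m_i=1$ as well as at $m_i=0$, and the inequality is strict for $m_i\ge 2$. So this route also yields uniqueness of the optimum.
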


\begin{proof}
As in the proof of Theorem~\ref{thm:optimal},  we let $X_S$ be the number of missed threats when the interceptors are fired using Strategy~$S$,  where $S$ is an arbitrary strategy.  We number the $n$ aerial threats as Threat~1, Threat
~2, \ldots, Threat~$n$, and let $m_i$ denote the number of interceptors fired at Threat~$i$ for $1\leq i\leq n$.  We still write $X_S$ as 
\[
X_S=X_1+\ldots+X_n,
\]
where
\[
X_i=\left\{
		                           \begin{array}{ll}
		                           1,& \hbox{if $m_i=0$,} \\
					 \sim \ber((1-p)^{m_i}) & \hbox{if $m_i\geq 1$,} 
		                          \end{array}
		                         \right.
\]
Since $m<n$,  at least $n-m$ threats will not be neutralized,  regardless of the strategy employed.  Thus, we may assume without loss of generality that $X_{m+1}=X_{m+2}=\ldots=X_n=1$.  By the arithmetic-geometric mean inequality (e.g.,  Chapter~2 of Steele 2008 or Theorem~20.II of Aigner and Ziegler 2018),  we have
\begin{eqnarray*}
\E[X_S] &=& \E[X_1]+\ldots+ \E[X_m] +(n-m)\\
		&\geq& \E[X_1]\times \ldots \times \E[X_m]+(n-m), 
\end{eqnarray*}
where the last inequality is an equality if and only if $ \E[X_1]= \E[X_2]=\ldots  \E[X_m]$,  i.e.,  $(1-p)^{m_1}=(1-p)^{m_2}=\ldots (1-p)^{m_m}$.  Since $0<p<1$ and $m_1+\ldots+m_m=m$,  $(1-p)^{m_1}=(1-p)^{m_2}=\ldots (1-p)^{m_m}$ holds if and only if $m_1=\ldots=m_m=1$.  Thus,  $\E[X_S]$ attains its minimum value when we fire exactly one interceptor at a different aerial threat. This proves that Strategy~A is optimal. 
\end{proof}


%



\section*{Conflict of interest}
The author has no conflict of interest to declare.

\section*{References}
\begin{description}

\item{} Aigner, M.~and Ziegler, G.M.~(2018).  \emph{Proofs from THE BOOK}. Springer: New York.

\item{} Atkinson and Kress (2025).  Hard and soft defense against a sequence of aerial threats. \emph{Operations Research}~73(4), 1767--1784.

\item{} Avid, Y.~and Kress, M.~(1997). Evaluating the  effectiveness of shoot-look-shoot tactics in the presence of incomplete damage information.  \emph{Military Operations Research}~3(1), 79--89.


\item{} Glazebrook, K.~and Washburn, A.~(2004).  Shoot-look-shoot: a review and extension.  \emph{Operations Research}~52(3),  454--463.






\item{} Kalyanam, K.~and Clarkson J.~(2021).  Sequential attack salvo size is monotonic nondecreasing in both
time and inventory level.  \emph{Naval Research Logistics}~58(3), 304--321.

\item{} Kline, A.,  Darryl,  A.~and Raymond H.~(2019). The weapon-target assignment problem. \emph{Computers and Operations Research}~105, 226--236.



\item{} Pryluk, R.~and Shima, T.~(2016). Shoot-shoot-look for an air-defense system. \emph{IEEE Systems Journal}~10(1), 151--161.


\item{} Steele, J.M.~(2008). \emph{The Cauchy-Schwarz Master Class: An Introduction to the Art of Mathematical Inequalities}.  Cambridge University Press: Cambridge, UK. 

\item{} Stromberg, K.R.~(1981).  \emph{An Introduction to Classical Real Analysis}. AMS Chelsha Publishing: Providemce, Rhode Island. 





\item{} Washburn, A.~and Kress, M.~(2009). \emph{Combat Modeling}.  Springer: New York. 


\item{} Washburn, A. (2005). The Bang-soak theory of missile attack and terminal defense.  \emph{Military Operations Research}~10(1), 15--23.

\item{} Yost, K.A.~and Washburn, A.~(2000).Optimizing assignment of air-to-ground assets and BDA sensors. \emph{Military Operations Research}~5(2), 77--91.

\end{description}

\end{document}